\documentclass{amsart}

\newtheorem{thm}{Theorem}

\newtheorem{lem}{Lemma}

\newcommand{\A}{{\mathcal A}}

\newcommand{\U}{{\mathcal U}}

\newcommand{\IC}{{\mathbb C}}
\newcommand{\ID}{{\mathbb D}}

\newcommand{\D}{{\mathbb D}}

\def\be{\begin{equation}}
\def\ee{\end{equation}}

\begin{document}

\title[First-order Schwartian derivative and some classes]{First-order Schwartian derivative and some classes of univalent functions }

\author[M. Obradovi\'{c}]{Milutin Obradovi\'{c}}
\address{Department of Mathematics,
Faculty of Civil Engineering, University of Belgrade,
Bulevar Kralja Aleksandra 73, 11000, Belgrade, Serbia}
\email{obrad@grf.bg.ac.rs}

\author[N. Tuneski]{Nikola Tuneski}
\address{Department of Mathematics and Informatics, Faculty of Mechanical Engineering, Ss. Cyril and Methodius,
University in Skopje, Karpo\v{s} II b.b., 1000 Skopje, Republic of North Macedonia.}
\email{nikola.tuneski@mf.edu.mk}

\subjclass[2000]{30C45, 30C50, 30C55}

\keywords{analytic,univalent, Schwartian derivative,  starlike,convex}

\maketitle

\begin{abstract}
In this paper we give estimate of $|S'_{f}(0)|$, where
$S_{f}(z)= \left(\frac{f''(z)}{f'(z)}\right)'-\frac{1}{2}\left(\frac{f''(z)}{f'(z)}\right)^{2}$ is the Schwartian derivative, and $f$ belongs to different classes of functions univalent in the open unit disc $\ID$.
\end{abstract}

\section{Introduction}

Let ${\mathcal A}$ denote the family of all functions analytic in the open unit disk $\ID := \{ z\in \IC:\, |z| < 1 \}$,  satisfying the normalization $f(0)=0= f'(0)-1$, i.e.,
\begin{equation}\label{eq-1}
f(z)=z+a_2z^2+a_3z^3+\cdots.
\end{equation}
Let $\mathcal{S}$ be the subclass of $\mathcal{A}$ consisting of functions that are univalent in $\ID$.

\medskip

Next we will define several classes of univalent functions that will be studied later in the paper.

\medskip

For $0\leq \alpha<1$, let $\mathcal{S}^{\star}(\alpha)$ and $\mathcal{C}(\alpha)$ denote the subclasses of
${\mathcal A}$ which are starlike of order $\alpha$ and convex of order $\alpha$ in $\ID$, respectively, i.e., with analytical characterisation:
\[ \mathcal{S}^{\star}(\alpha) = \left\{ f\in\mathcal{A}: \operatorname{Re}\left[\frac{zf'(z)}{f(z)}\right] >\alpha,\, z\in\ID \right\} \]
and
\[ \mathcal{C}(\alpha) = \left\{ f\in\mathcal{A}: \operatorname{Re}\left[1+\frac{zf''(z)}{f'(z)}\right] >\alpha,\, z\in\ID \right\}.\]

\medskip

Further, let  $\mathcal{S}\mathcal{S}^{\star}_{\beta}$ and  $\mathcal{S}\mathcal{C}_{\beta}$, $0<\beta\le1$,
be the subclasses of $\mathcal{A}$ consisting of functions $f$ satisfying
\[
\left|\arg \left(\frac{zf'(z)}{f(z)}\right)\right|<\frac{\pi\beta}{2}\quad (z\in \ID)
\]
and
\[
\left|\arg \left(1+\frac{zf''(z)}{f'(z)}\right)\right|<\frac{\pi\beta}{2}\quad (z\in \ID),
\]
respectively. The functions from $\mathcal{S}\mathcal{S}^{\star}_{\beta}$ are called strongly starlike of order $\beta$, and from $\mathcal{S}\mathcal{C}_{\beta}$
strongly convex of order $\beta$.

\medskip

Similarly, $\mathcal{G}(\alpha)$, $0<\alpha\leq 1$, is the class of functions $f\in \mathcal{A}$ for which
\[
{\rm Re}\left[1+\frac{zf''(z)}{f'(z)}\right] <1+\frac{1}{2}\alpha \quad (z\in\ID).
\]
Ozaki in \cite{ozaki-1941} introduced the class $\mathcal{G}(1)$ and proved that functions in $\mathcal{G}(1)$ are univalent in the unit disk. Later, Umezawa in \cite{umezawa}, Sakaguchi in \cite{saka} and R. Singh and S. Singh in \cite{singh} showed, respectively,  that functions in $\mathcal{G}(1)$ are convex in one direction, close-to-convex and starlike. This, general class is extensively studied  in  \cite{MO-1995} and \cite{MO-2013}.

\medskip

Further,
\[
\U(\lambda) = \left\{ f\in\A: \left|\left(\frac{z}{f(z)}\right)^2f'(z)-1\right| < \lambda, z\in\ID  \right\},
\]
where $0<\lambda\le1$, is subclass of the class of univalent functions, and its special case when $\lambda=1$ is first studied in \cite{Japonica_1996}. More details on them can be found in \cite{obpon-1,obpon-2,TTV}).

\medskip

Finally, for $0\leq \alpha<1$,
\[
\mathcal{R}(\alpha)=\left\{f\in\mathcal{A}: {\rm Re}f'(z)>\alpha,\, z\in\ID \right\}
\]
is subclass of the class of function of bounded turning $\mathcal{R}(0)$.

\medskip

For a locally univalent function $f$ in $\ID$, the Schwartian derivative $S_{f}$ is defined as
\[
S_{f}(z)= \left(\frac{f''(z)}{f'(z)}\right)'-\frac{1}{2}\left(\frac{f''(z)}{f'(z)}\right)^{2}.
\]
In \cite{QW} it is shown that for the function $f$ given by \eqref{eq-1},
\begin{equation}\label{eq-2}
S'_{f}(0)=24(a_{4}-2a_{2}a_{3}+a_{2}^{3}).
\end{equation}

\medskip

Using some properties of Caratheodory class $\mathcal{P}$ consisting of functions $p(z)= 1+p_{1}z+p_{2}z^{2}+\cdots$ such that ${\rm Re}\{p(z)\}>0$ $z\in \ID$, in \cite{QW} the authors gave the estimate of $|S'_{f}(0)|$ for the classes $\mathcal{S}^{\star}(\alpha)$, $\mathcal{C}(\alpha)$, $\mathcal{S}\mathcal{S}^{\star}_{\beta}$, and  $\mathcal{S}\mathcal{C}_{\beta}$.

\medskip

In this paper, using different approach, we will give the estimates of $|S'_{f}(0)|$ for the classes of univalent functions defined above, as well as, for the general class $\mathcal{S}$.

\medskip

\section{The estimate of $|S'_{f}(0)|$ for some classes of univalent functions}

\medskip

In the study of $|S'_{f}(0)|$ we will need the following Lemma from \cite{126}, given using the notation from that paper.

\medskip

\begin{lem}\label{lem-1} If $\omega(z)=c_{1}z+c_2z^2+c_3z^3+\cdots $ is analytic in $\D$ satisfying the condition
$|\omega(z)|<1$, $z\in\D $, and if
$$ \Psi(\omega)=|c_{3}+\mu c_{1}c_{2}+\nu c_{1}^{3}|,$$
then the following sharp estimate $\Psi(\omega)\leq \Phi(\mu, \nu)$ holds, where
\[\Phi(\mu,\nu)=
\begin{cases}
1, & (\mu,\nu)\in D_{1}\cup D_{2}\cup\{(2,1)\},\\
|\nu|, & (\mu,\nu)\in D_{6},\\
\end{cases}\]
where
$$D_{1}=\left\{(\mu, \nu): |\mu|\leq\frac{1}{2},\,-1\leq\nu\leq 1\right\} ,$$
$$D_{2}=\left\{(\mu, \nu): \frac{1}{2}\leq |\mu|\leq 2,\,\frac{4}{27}(|\mu|+1)^{3}-(|\mu|+1)\}\leq\nu\leq 1 \right\},$$
and
$$D_{6}=\left\{(\mu, \nu): 2\leq|\mu|\leq4,\,\nu\geq\frac{1}{12}(\mu^{2}+8)\right\}.$$
\end{lem}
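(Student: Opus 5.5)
This is the Prokhorov–Szynal estimate for the functional $c_3+\mu c_1c_2+\nu c_1^3$ over Schwarz functions. The plan is to reduce it to an extremal problem in two real parameters using the Schur parametrization of the coefficients. The standard representation, obtained from Schur's algorithm, is
\[
c_1=a,\qquad c_2=(1-|a|^2)b,\qquad c_3=(1-|a|^2)\bigl(c-\bar a b^2\bigr)-(1-|a|^2)\bar a\,\ldots ,
\]
with $|a|,|b|,|c|\le1$. More usefully, the sharp description is
\[
c_3=(1-|c_1|^2)\Bigl(1-\frac{|c_2|^2}{1-|c_1|^2}\Bigr)\zeta-\frac{\bar c_1c_2^2}{1-|c_1|^2},\qquad |\zeta|\le1 .
\]
By rotating $\omega(z)\mapsto e^{-i\theta}\omega(e^{i\theta}z)$ I may assume $c_1=x\in[0,1]$, and I write $c_2=(1-x^2)y$ with $|y|\le1$. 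Substituting and applying the triangle inequality in $\zeta$ gives
\[
\Psi\le (1-x^2)\bigl(1-|y|^2\bigr)+\bigl|\nu x^3+\mu x(1-x^2)y-x(1-x^2)y^2\bigr| .
\]
The task is then to maximize the right side over $x\in[0,1]$ and $|y|\le1$.

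For the region $D_1\cup D_2$, where the claimed bound is $1$, I would bound the modulus by $|\nu|x^3+|\mu|x(1-x^2)t+x(1-x^2)t^2$ with $t=|y|$. This uses $|\nu|\le1$, which holds on $D_1$ and on $D_2$ with its lower limit $\ge -1$. The target inequality is
\[
(1-x^2)(1-t^2)+|\nu|x^3+|\mu|x(1-x^2)t+x(1-x^2)t^2\le1 .
\]
Using $1-x^3\ge (1-x^2)$-type comparisons, this reduces to checking a quadratic in $t$ on $[0,1]$ after dividing by $1-x^2$. For $|\mu|\le\tfrac12$ it is immediate. For $\tfrac12\le|\mu|\le2$ the triangle inequality is too lossy when $\nu<0$. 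There I would instead keep the sign and choose the phase of $y$ to maximize, which leads to a one-variable problem in $x$. The cubic boundary $\nu=\frac{4}{27}(|\mu|+1)^3-(|\mu|+1)$ should appear exactly as the condition that the maximum in $x$ of $|\nu x^3+(|\mu|+1)x(1-x^2)\cdots|$ stays $\le1$. This is where the factor $4/27$ comes from, via the critical point of $x(1-x^2)$-type cubics.

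For $D_6$ the bound is $|\nu|$, attained at $x=1$ by $\omega(z)=z$. Here I would show that the function
\[
F(x,t)=(1-x^2)(1-t^2)+\nu x^3+|\mu|x(1-x^2)t+x(1-x^2)t^2
\]
is at most $\nu$. The condition $\nu\ge(\mu^2+8)/12$ should be precisely the requirement that $F$ be maximized on the boundary $x=1$. I would check this by maximizing in $t$ first, which gives a quadratic with vertex at $t=|\mu|x/(2(1-x))$, clipped to $[0,1]$. I would then show that $\nu-F\ge0$ factors as $(1-x)$ times a nonnegative expression. The point $(2,1)$ is handled the same way as a limiting case.

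For sharpness I would use $\omega(z)=z^3$ (value $1$) and $\omega(z)=z$ (value $|\nu|$).

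The main obstacle is the $D_2$ part with negative $\nu$. There the naive triangle inequality fails, and one must carefully optimize the phase of $y$ and the variable $x$ to recover the exact cubic boundary. I expect this to require a case analysis following the original Prokhorov–Szynal argument.
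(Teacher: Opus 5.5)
The paper gives no proof of this lemma; it is quoted from Prokhorov--Szynal, so your argument has to stand on its own. Your reduction is the right one. With $c_1=x\ge0$ and $c_2=(1-x^2)y$, the supremum of $\Psi$ equals the maximum of $(1-x^2)(1-|y|^2)+|\nu x^3+x(1-x^2)(\mu y-y^2)|$ over $x\in[0,1]$, $|y|\le1$. There are two slips:
the coefficient of $\zeta$ in your formula for $c_3$ should be $1-|c_1|^2-|c_2|^2/(1-|c_1|^2)$, although your next line uses the correct value $(1-x^2)(1-|y|^2)$; and $e^{-i\theta}\omega(e^{i\theta}z)$ leaves $c_1$ unchanged, so use $\omega(e^{i\theta}z)$ instead. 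The term-by-term majorant does settle $D_1$, and the two sharpness examples are correct.

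\textbf{The gap.} The majorant $F=(1-x^2)(1-t^2)+|\nu|x^3+|\mu|x(1-x^2)t+x(1-x^2)t^2$ is not bounded by $\Phi$ on large parts of $D_2$ and $D_6$. So the inequalities you propose to verify there are false.
\begin{itemize}
\item On $t=1$ you have $F=(|\mu|+1)x-(|\mu|+1-|\nu|)x^3$. For $\tfrac12\le|\mu|\le2$, $|\nu|\le1$, its maximum in $x$ is $\le1$ if and only if $|\nu|\le(|\mu|+1)-\frac{4}{27}(|\mu|+1)^3$. At $(1,1)\in D_2$ the maximum is $\frac{4\sqrt6}{9}>1$, and for $|\mu|>\frac{3\sqrt3}{2}-1$ the bound fails at every point of $D_2$.
\item On $D_6$, taking $x=1-\epsilon$ and $t=1$ gives $F=\nu+\epsilon(2|\mu|+2-3\nu)+O(\epsilon^2)$. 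So $F\le\nu$ forces $\nu\ge\frac23(|\mu|+1)$, which is strictly stronger than $\nu\ge\frac{\mu^2+8}{12}$ for $2\le|\mu|\le4$. At $(2,1)$, $F(x,1)=3x-2x^3$ has maximum $\sqrt2>1$.
\end{itemize}

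Your diagnosis is also inverted. For $\nu\le0$, the choice $y=-\operatorname{sgn}\mu$ puts $\nu x^3$, $\mu x(1-x^2)y$ and $-x(1-x^2)y^2$ on the same side. So $F$ is attained on $t=1$ and reproduces exactly the cubic lower boundary of $D_2$; the triangle inequality costs nothing there. The loss occurs for $\nu>0$, where $-y^2$ cannot be aligned with $\nu x^3$ and $\mu y$ at the same time.

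What is missing is an argument that keeps the sign of $\operatorname{Re}(-y^2)$. Writing $y=u+iv$, one has $1-|y|^2+\operatorname{Re}(\mu y-y^2)=1+\mu u-2u^2\le1+\mu^2/8$. This is exactly what produces the threshold $\frac{\mu^2+8}{12}$ at $x=1$. It still has to be turned into a bound valid for all $x\in[0,1]$, for example by treating $|\nu x^3+x(1-x^2)(\mu y-y^2)|^2$ exactly in $u,v$. That is the substance of the Prokhorov--Szynal case analysis, which you defer. As written, only $D_1$ and sharpness are established.
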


\medskip

\begin{thm}\label{th1}
Let $f\in\mathcal{R}(\alpha)$, $0\leq \alpha<1$, then $|S'_{f}(0)|\leq 12(1-\alpha).$
This result is sharp
\end{thm}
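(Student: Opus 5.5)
The plan is to write $f'$ through a Schwarz function, compute $a_2,a_3,a_4$ in terms of its coefficients, and then apply Lemma~\ref{lem-1} to the combination appearing in \eqref{eq-2}.

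Set $\beta=1-\alpha\in(0,1]$. Since $\operatorname{Re} f'(z)>\alpha$ and $f'(0)=1$, there is an analytic $\omega(z)=c_1z+c_2z^2+c_3z^3+\cdots$ with $|\omega(z)|<1$ in $\ID$ such that
\[
f'(z)=\alpha+\beta\,\frac{1+\omega(z)}{1-\omega(z)}=1+2\beta\bigl(\omega+\omega^2+\omega^3+\cdots\bigr).
\]
Expanding the powers of $\omega$ and comparing coefficients gives
\[
2a_2=2\beta c_1,\qquad 3a_3=2\beta(c_2+c_1^2),\qquad 4a_4=2\beta(c_3+2c_1c_2+c_1^3).
\]
Hence
\[
a_2=\beta c_1,\qquad a_3=\tfrac{2\beta}{3}(c_2+c_1^2),\qquad a_4=\tfrac{\beta}{2}(c_3+2c_1c_2+c_1^3).
\]
Substituting into \eqref{eq-2} and collecting terms, I expect
\[
a_4-2a_2a_3+a_2^3=\frac{\beta}{2}\Bigl[c_3+\mu c_1c_2+\nu c_1^3\Bigr],
\]
with $\mu=2-\tfrac{8}{3}\beta$ and $\nu=1-\tfrac{8}{3}\beta+2\beta^2$. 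Therefore $|S'_f(0)|=12\beta\,\Psi(\omega)$. It then suffices to show $\Phi(\mu,\nu)=1$ by checking that $(\mu,\nu)\in D_1\cup D_2$ for every $\beta\in(0,1]$.

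\emph{The upper bound $\nu\le 1$.} We have $\nu-1=\beta(2\beta-\tfrac83)\le0$. As $\beta$ runs over $(0,1]$, $\mu$ runs over $[-\tfrac23,2)$. Moreover $\nu$ attains its minimum $\tfrac19$ at $\beta=\tfrac23$, so $\nu>0$ throughout.

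\emph{The case $|\mu|\le\tfrac12$.} This is $D_1$, and it holds immediately since $0<\nu\le 1$.

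\emph{The case $|\mu|\ge\tfrac12$.} This is the main step: we must verify the lower bound $\nu\ge g(t):=\tfrac{4}{27}t^3-t$ with $t=|\mu|+1$. Since $g'(t)=\tfrac49t^2-1$, $g$ is increasing on $[\tfrac32,3]$.

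For negative $\mu$ we have $|\mu|\le\tfrac23$, so $t\le\tfrac53$ and $g(t)\le g(\tfrac53)<0<\nu$.

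For positive $\mu\in[\tfrac12,2)$, put $s=\tfrac83\beta\in(0,\tfrac32]$, so that $t=3-s$. A direct expansion should give
\[
\nu-g(t)=s\Bigl(2-\tfrac{101}{96}s+\tfrac{4}{27}s^2\Bigr).
\]
This is positive for $0<s\le\tfrac32$, since $2-\tfrac{101}{96}\cdot\tfrac32>0$. Thus $(\mu,\nu)\in D_2$. Lemma~\ref{lem-1} then gives $\Psi(\omega)\le1$, and so $|S'_f(0)|\le12(1-\alpha)$.

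For sharpness, I take $\omega(z)=z^3$, i.e.\ $f'(z)=\dfrac{1+(1-2\alpha)z^3}{1-z^3}$. Then $c_1=c_2=0$ and $c_3=1$, so $a_2=a_3=0$ and $a_4=\tfrac{\beta}{2}$. This gives $|S'_f(0)|=24a_4=12(1-\alpha)$, so the bound is attained.
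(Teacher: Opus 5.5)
Your proposal is correct and follows essentially the same route as the paper. You use the same Schwarz-function representation of $f'$, the same $a_2,a_3,a_4$ and the same pair $(\mu,\nu)$: your $\mu=2-\tfrac83\beta$, $\nu=1-\tfrac83\beta+2\beta^2$ are the paper's $\tfrac23(4\alpha-1)$ and $\tfrac13(6\alpha^2-4\alpha+1)$ with $\beta=1-\alpha$. You then reduce to $(\mu,\nu)\in D_1\cup D_2$ in Lemma~\ref{lem-1} and use the same extremal function $\omega(z)=z^3$. Your explicit identity $\nu-g(t)=s\bigl(2-\tfrac{101}{96}s+\tfrac{4}{27}s^2\bigr)$, together with the bound $g(5/3)<0$ for negative $\mu$, supplies the $D_2$ verification that the paper only asserts can be ``easily verified.''
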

\begin{proof}
From the definition of the class $\mathcal{R}(\alpha)$, we have
\[
f'(z)=\alpha +(1-\alpha)\frac{1+\omega(z)}{1-\omega(z)} \quad \left( =2\alpha -1+2(1-\alpha)\frac{1}{1-\omega(z)}\right),
\]
where $\omega$ is analytic in $\ID$ with $\omega(0)=0$ and $|\omega(z)|<1$, $z\in\ID$. From here
\[
f'(z)= 1+2(1-\alpha)\left[\omega(z)+\omega^{2}(z)+\cdots\right].
\]
If we put $\omega(z)=c_{1}z+c_{2}z^{2}+\cdots$, and compare the coefficients on $z$, $z^{2}$, and $z^{3}$, after some calculations, we obtain
\be\label{eq-5}
\begin{split}
a_{2}&=(1-\alpha)c_{1}, \\
a_{3}&=\frac{2}{3}(1-\alpha)\left(c_{2}+c_{1}^{2}\right),\\
a_{4}&=\frac{1}{2}(1-\alpha)\left(c_{3}+2 c_{1}c_{2}+ c_{1}^{3}\right).
\end{split}
\ee
Now, using \eqref{eq-2} and \eqref{eq-5} we get
\[
\begin{split}
|S'_{f}(0)|&=24|a_{4}-2a_{2}a_{3}+a_{2}^{3}|\\
&=12(1-\alpha)\left|c_{3}+\frac{2}{3}(4\alpha-1)c_{1}c_{2}+\frac{6\alpha^{2}-4\alpha+1}{3}c_{1}^{3}\right|\\
&=12(1-\alpha)\cdot \Psi(\omega),
\end{split}
\]
where we used notation from Lemma \ref{lem-1}, with $\mu = \frac{2}{3}(4\alpha-1)$ and $\nu = \frac{6\alpha^{2}-4\alpha+1}{3}$.
Using the lemma, to prove the estimate, it is enough to show that for $0\le\alpha<1$, $(\mu,\nu)$ is in $D_1$ or in $D_2$, which will mean that $\Psi(\omega)\le \Phi(\mu,\nu)=1$.

\medskip

Indeed, if $1/16\le\alpha<7/16$, or equivalently $|\mu|\le1/2$, then $0\le\nu\le1$, i.e., $(\mu,\nu)$ is in $D_1$.

\medskip

Otherwise, if $0\le\alpha\le1/16$ or $7/16\le\alpha\le1$, we have $1/2\le|\mu|\le2$ and easily $\nu\le1$.
In the first case, if $0\le\alpha\le1/16$, then $\mu<0$, while in the second case, if $7/16\le\alpha\le1$, then $\mu>0$. In both cases it can be easily verified that
\[ \frac{4}{27}(|\mu|+1)^{3}-(|\mu|+1)\}\leq\nu,\]
leading to the conclusion  $(\mu,\nu)\in D_2$.

\medskip

The result is sharp for the function for which $f'(z)=\alpha +(1-\alpha)\frac{1+z^{3}}{1-z^{3}}.$
\end{proof}

\medskip

\begin{thm}\label{th2}
Let $f\in\mathcal{C}(\alpha)$ $-\frac{1}{2}\leq \alpha<1$. Then $|S'_{f}(0)|\leq 4(1-\alpha)$, and the bound is sharp.
\end{thm}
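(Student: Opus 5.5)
The plan is to avoid computing $a_2,a_3,a_4$ separately. Instead I would express $S'_f(0)$ directly through the Taylor coefficients of $g=f''/f'$, which is exactly the quantity controlled by the definition of $\mathcal{C}(\alpha)$. Then Lemma \ref{lem-1} applies as in the proof of Theorem \ref{th1}.

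\textbf{Representation.} For $f\in\mathcal{C}(\alpha)$, write
\[
1+\frac{zf''(z)}{f'(z)}=\alpha+(1-\alpha)\frac{1+\omega(z)}{1-\omega(z)},
\]
with $\omega$ a Schwarz function, $\omega(z)=c_1z+c_2z^2+\cdots$. This gives
\[
z\frac{f''(z)}{f'(z)}=t\left(\omega+\omega^2+\omega^3+\cdots\right),\qquad t:=2(1-\alpha).
\]
Write $g(z)=f''(z)/f'(z)=b_0+b_1z+b_2z^2+\cdots$. Comparing coefficients gives
\[
b_0=tc_1,\qquad b_1=t(c_2+c_1^2),\qquad b_2=t(c_3+2c_1c_2+c_1^3).
\]

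\textbf{Derivative of the Schwarzian.} Since $S_f=g'-\frac12 g^2$, we have $S'_f(0)=2b_2-b_0b_1$. Substituting,
\[
S'_f(0)=2t\Big[c_3+\big(2-\tfrac t2\big)c_1c_2+\big(1-\tfrac t2\big)c_1^3\Big]
=4(1-\alpha)\left[c_3+(1+\alpha)c_1c_2+\alpha c_1^3\right].
\]
So $|S'_f(0)|=4(1-\alpha)\Psi(\omega)$ with $\mu=1+\alpha$ and $\nu=\alpha$. It therefore suffices to show $\Phi(\mu,\nu)=1$. One could cross-check this against \eqref{eq-2}, but that is not needed.

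\textbf{Checking the region (the main step).} For $-\frac12\le\alpha<1$ we have $\mu\in[\frac12,2)$ and $\nu=\mu-1\in[-\frac12,1)$, so $\nu\le1$ holds automatically. The remaining task is to verify the lower boundary of $D_2$:
\[
\tfrac{4}{27}(\mu+1)^3-(\mu+1)\le \mu-1 .
\]
Put $x=\mu+1\in[\frac32,3]$. The inequality becomes $h(x):=\frac{4}{27}x^3-2x+2\le0$. The function $h$ is convex for $x>0$, so it is enough to check the endpoints: $h(3/2)=-\frac12$ and $h(3)=0$. Hence $h\le 0$ on the whole interval, so $(\mu,\nu)\in D_2$ and Lemma \ref{lem-1} gives $\Psi(\omega)\le1$. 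Note that the hypothesis $\alpha\ge-\frac12$ is exactly what guarantees $\mu\ge\frac12$.

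\textbf{Sharpness.} Take $\omega(z)=z^3$, so $c_1=c_2=0$ and $c_3=1$. This corresponds to the function $f$ defined by
\[
1+\frac{zf''(z)}{f'(z)}=\alpha+(1-\alpha)\frac{1+z^3}{1-z^3},
\]
for which $|S'_f(0)|=4(1-\alpha)$.
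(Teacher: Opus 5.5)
Your proof is correct and follows the paper's argument. It uses the same Schwarz-function representation, the same reduction to $4(1-\alpha)\Psi(\omega)$ with $\mu=1+\alpha$, $\nu=\alpha$, the same appeal to case $D_2$ of Lemma~\ref{lem-1}, and the same extremal function coming from $\omega(z)=z^3$. The differences are only computational: you read off the coefficients of $f''/f'$ and use $S'_f(0)=2b_2-b_0b_1$ instead of computing $a_2,a_3,a_4$ and invoking \eqref{eq-2}, and you check the lower boundary of $D_2$ by convexity of $h$ rather than through the paper's factorization $(\alpha-1)(2\alpha^2+14\alpha+11)\le 0$.
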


\begin{proof} From the definition of the class $\mathcal{C}(\alpha)$, similarly as in the proof of Theorem \ref{th1}, we have
\be\label{eq-6}
\left( z f'(z)\right)'=\left[1+2(1-\alpha)(\omega(z)+\omega^{2}(z)+\cdots)\right]\cdot f'(z),
\ee
where $\omega$ is analytic in $\ID$ with $\omega(0)=0$ and $|\omega(z)|<1$, $z\in\ID$.
If we put $\omega(z)=c_{1}z+c_{2}z^{2}+\cdots$, and compare the coefficients of $z$, $z^{2}$, and $z^{3}$, from the relation \eqref{eq-6} after some simple calculations we obtain,
\be\label{eq-7}
\begin{split}
a_{2}&=(1-\alpha)c_{1}, \\
a_{3}&=\frac{1}{3}(1-\alpha)\left(c_{2}+(3-2\alpha)c_{1}^{2}\right),\\
a_{4}&=\frac{1}{6}(1-\alpha)\left(c_{3}+(5-3\alpha) c_{1}c_{2}+(2\alpha^{2}-7\alpha +6) c_{1}^{3}\right).
\end{split}
\ee
Next, using \eqref{eq-2} and \eqref{eq-7} we can have
\[
\begin{split}
|S'_{f}(0)|&=24|a_{4}-2a_{2}a_{3}+a_{2}^{3}|\\
&=4(1-\alpha)\left|c_{3}+(\alpha+1)c_{1}c_{2}+\alpha c_{1}^{3}\right|\\
&\leq 4(1-\alpha),
\end{split}
\]
where we used Lemma \ref{lem-1}, case $D_{2}$. Namely, for $-\frac{1}{2}\leq \alpha<1$ we have $ \frac{1}{2}\leq \mu=\alpha +1\leq 2 $, and we have
$\frac{4}{27}(|\mu|+1)^{3}-(|\mu|+1)\leq\nu\leq 1$ being equivalent to
$\frac{4}{27}(\alpha+2)^{3}-(\alpha +2)\leq\alpha\leq 1$, i.e., to the inequality
$(\alpha-1)(2\alpha^{2}+14\alpha+11)\leq0$ which can be easily verified to be true.

\medskip

Given result is sharp for the function satisfying
$1+\frac{zf''(z)}{f'(z)}= \alpha +(1-\alpha)\frac{1+z^{3}}{1-z^{3}}$, or equivalently,
$f(z)=\int_{0}^{z}(1-t^{3})^{-\frac{2}{3}(1-\alpha)}dt .$

\medskip

We note that in \cite{QW} the authors gave the same result, but only for the case $0\leq \alpha<1$
\end{proof}

\begin{thm}\label{th3}
Let $f\in\mathcal{G}(\alpha),\,0<\alpha\leq1$, then $|S'_{f}(0)|\leq \alpha^{2}+2\alpha$,
and this result is sharp.
\end{thm}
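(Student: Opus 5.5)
The plan is the same as for Theorems \ref{th1} and \ref{th2}: parametrize $\mathcal{G}(\alpha)$ by a Schwarz function, compute $a_2,a_3,a_4$, rewrite $S'_f(0)$ in the form $c_3+\mu c_1c_2+\nu c_1^3$, and apply Lemma \ref{lem-1}. This time the parameters will land in $D_6$ rather than $D_1\cup D_2$.

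\textbf{Parametrization.} Since ${\rm Re}\left[1+\frac{zf''(z)}{f'(z)}\right]<1+\frac{\alpha}{2}$ and the left side equals $1$ at $z=0$, the function $1-\frac{2}{\alpha}\frac{zf''(z)}{f'(z)}$ belongs to $\mathcal{P}$. Writing it as $\frac{1+\omega}{1-\omega}$ gives
\[
\frac{zf''(z)}{f'(z)}=-\alpha\frac{\omega(z)}{1-\omega(z)}=-\alpha\left[\omega(z)+\omega^2(z)+\cdots\right],
\]
with $\omega(z)=c_1z+c_2z^2+\cdots$ a Schwarz function.

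\textbf{Coefficients.} I multiply out $zf''(z)=f'(z)\cdot\left(-\alpha(\omega+\omega^2+\omega^3)\right)$ and compare the coefficients of $z,z^2,z^3$. This gives
\[
a_2=-\frac{\alpha}{2}c_1,\qquad a_3=\frac{1}{6}\left(-\alpha c_2+(\alpha^2-\alpha)c_1^2\right),
\]
\[
a_4=\frac{1}{12}\left(-\alpha c_3+\left(\tfrac{3}{2}\alpha^2-2\alpha\right)c_1c_2+\left(-\alpha+\tfrac32\alpha^2-\tfrac12\alpha^3\right)c_1^3\right).
\]
Substituting into \eqref{eq-2}, the cubic terms in $\alpha$ should cancel, and I expect
\[
|S'_f(0)|=2\alpha\left|c_3+\frac{\alpha+4}{2}c_1c_2+\frac{\alpha+2}{2}c_1^3\right|=2\alpha\,\Psi(\omega),
\]
with $\mu=\frac{\alpha+4}{2}$ and $\nu=\frac{\alpha+2}{2}$. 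This bookkeeping is the main place errors can creep in, so I would double-check it, for instance by testing $\omega(z)=z$.

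\textbf{Locating $(\mu,\nu)$.} For $0<\alpha\le1$ we have $2<\mu\le\frac52$, so $D_1$ and $D_2$ are unavailable, but $D_6$ fits. Its condition $\nu\ge\frac{1}{12}(\mu^2+8)$ becomes
\[
24(\alpha+2)\ge(\alpha+4)^2+32,
\]
which is equivalent to $\alpha(16-\alpha)\ge0$, and this clearly holds. Hence Lemma \ref{lem-1} gives $\Psi(\omega)\le|\nu|=\frac{\alpha+2}{2}$, and therefore $|S'_f(0)|\le\alpha(\alpha+2)=\alpha^2+2\alpha$.

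\textbf{Sharpness.} The extremal case in $D_6$ corresponds to $\omega(z)=z$, that is, $c_1=1$ and $c_2=c_3=0$, which gives $\Psi=\nu$. The corresponding function satisfies $\frac{zf''(z)}{f'(z)}=-\frac{\alpha z}{1-z}$, that is, $f'(z)=(1-z)^{\alpha}$, so
\[
f(z)=\frac{1-(1-z)^{\alpha+1}}{\alpha+1}.
\]
I would verify directly from \eqref{eq-2} that this $f$ attains $|S'_f(0)|=\alpha^2+2\alpha$. The main obstacle is the coefficient algebra; once $\mu,\nu$ are correct, the $D_6$ check is immediate.
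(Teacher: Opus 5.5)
Your proposal is correct and follows the paper's proof essentially step by step. You use the same Schwarz-function parametrization and obtain the same $a_2,a_3,a_4$ (yours match the paper's after factoring out $-\alpha/24$). You reach the same reduction $|S'_f(0)|=2\alpha\,\Psi(\omega)$ with $\mu=2+\frac{\alpha}{2}$, $\nu=1+\frac{\alpha}{2}$, apply the $D_6$ case of Lemma~\ref{lem-1}, and use the same extremal function $f(z)=\frac{1-(1-z)^{\alpha+1}}{\alpha+1}$. You are in fact slightly more careful than the paper: you explicitly verify the defining inequality $\nu\ge\frac{1}{12}(\mu^2+8)$ of $D_6$ by reducing it to $\alpha(16-\alpha)\ge0$, whereas the paper only records the ranges of $\mu$ and $\nu$.
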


\begin{proof}
From the definition of the class $\mathcal{G}(\alpha)$ we have
\[
1+\frac{zf''(z)}{f'(z)}=1+\frac{1}{2}\alpha -\frac{\alpha}{2}\frac{1+\omega(z)}{1-\omega(z)}
\quad\left(=1+\alpha-\alpha\frac{1}{1-\omega(z)} \right),\]
where $\omega$ is analytic in $\ID$ with $\omega(0)=0$ and $|\omega(z)|<1$, $z\in\ID$.
The last relation can be rewritten as
\be\label{eq-8}
[z f'(z)]'=\left\{1-\alpha[\omega(z)+\omega^{2}(z)+\cdots]\right\}\cdot f'(z).
\ee
Similarly as in the proofs of previous theorems, putting $\omega(z)=c_{1}z+c_{2}z^{2}+\cdots$ in \eqref{eq-8} and comparing  the coefficients on $z$, $z^{2}$, and $z^{3}$, leads to
\be\label{eq-9}
\begin{split}
\displaystyle\smallskip
a_{2}&=-\frac{\alpha}{2}c_{1}, \\
a_{3}&=-\frac{\alpha}{6}\left[c_{2}+(1-\alpha) c_{1}^{2}\right],\\
a_{4}&=-\frac{\alpha}{24}\left[2c_{3}+(4-3\alpha)c_{1}c_{2}+(\alpha^{2}-3\alpha+2)c_{1}^{3}\right].
\end{split}
\ee
From \eqref{eq-2} and \eqref{eq-9}, after some calculations, we get
\[
\begin{split}
|S'_{f}(0)|&=24|a_{4}-2a_{2}a_{3}+a_{2}^{3}|\\
&=2\alpha\left|c_{3}+\left(2+\frac{1}{2}\alpha\right)c_{1}c_{2}+\left(1+\frac{1}{2}\alpha\right) c_{1}^{3}\right|\\
&\leq 2\alpha \left(1+\frac{1}{2}\alpha\right)\\
&=\alpha^{2}+2\alpha.
\end{split}
\]
In last step we applied Lemma \ref{lem-1}, case $D_{6}$, since $2<\mu=2+\frac{1}{2}\alpha\leq \frac{5}{2}$ and
$1<\nu= 1+\frac{1}{2}\alpha\leq \frac{3}{2}$. The result of this theorem is sharp as the function defined by
\[
1+\frac{zf''(z)}{f'(z)}=1+\frac{1}{2}\alpha -\frac{\alpha}{2}\cdot \frac{1+z}{1-z},
\]
shows, i.e., for $f(z)=\frac{1-(1-z)^{\alpha +1}}{\alpha +1}.$
\end{proof}

\medskip

For the proof of the next theorem we will need the following result proven in \cite{OP-2019} as a part of the proof of its Theorem 1.

\medskip

 \begin{lem}
 For each function $f$ in $\mathcal{U}(\lambda),$ $0< \lambda \leq 1,$ there exists function $\omega ,$ analytic in $\mathbb{D},$ such that $|\omega (z)|\leq |z| < 1,$ and $|\omega '(z)|\leq 1,$
 for all $z \in \mathbb{D},$ with
 \begin{equation}\label{eq-10}
 \frac{z}{f(z)} =1-a_2z-\lambda z\omega (z).
\end{equation}
 Additionally, for $\omega(z)=c_1z+c_2z^2+\cdots,$
 \begin{equation}\label{eq-11}
 |c_1|\leq 1\quad \mbox{and}\quad |c_2|\leq \frac{1}{2}(1-|c_1|^2).
 \end{equation}
\end{lem}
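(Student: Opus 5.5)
The plan is to construct $\omega$ explicitly from $z/f(z)$ and then read off all the stated properties from the defining inequality of $\mathcal{U}(\lambda)$, using the Schwarz lemma and the Schwarz--Pick coefficient inequality.

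\textbf{Construction of $\omega$.} For $f\in\mathcal{U}(\lambda)$, the quotient $f(z)/z$ does not vanish in $\D$; this is implicit in the definition of $\mathcal{U}(\lambda)$, since the expression $(z/f(z))^2f'(z)$ must be analytic in $\D$. Hence $g(z):=z/f(z)$ is analytic in $\D$ and has the expansion $g(z)=1-a_2z+b_2z^2+\cdots$. Since $g(z)-1+a_2z=O(z^2)$, we may define
\[
\omega(z):=-\frac{g(z)-1+a_2z}{\lambda z},
\]
which is analytic in $\D$ with $\omega(0)=0$. By construction, \eqref{eq-10} holds.

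\textbf{Rewriting the defining inequality.} Differentiating $g=z/f$ gives $g-zg'=z^2f'/f^2$, so
\[
\left(\frac{z}{f(z)}\right)^2f'(z)=g(z)-zg'(z).
\]
Substituting $g=1-a_2z-\lambda z\omega$, the terms $1-a_2z$ cancel against their contribution to $zg'$, and $-\lambda z\omega-z(-\lambda\omega-\lambda z\omega')=\lambda z^2\omega'$. Hence
\[
\left(\frac{z}{f(z)}\right)^2f'(z)-1=\lambda z^2\omega'(z).
\]
The condition defining $\mathcal{U}(\lambda)$ therefore becomes $|z^2\omega'(z)|<1$ in $\D$.

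\textbf{Bounds on $\omega'$ and $\omega$.} The function $h(z)=z^2\omega'(z)$ maps $\D$ into $\D$ and vanishes to order two at the origin. The Schwarz lemma, applied to $h(z)/z$, gives $|h(z)|\le|z|^2$, that is, $|\omega'(z)|\le1$ in $\D$. Then $\omega(z)=\int_0^z\omega'(t)\,dt$ along the radius, which yields $|\omega(z)|\le|z|<1$.

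\textbf{Coefficient bounds \eqref{eq-11}.} The derivative $\omega'(z)=c_1+2c_2z+\cdots$ is analytic in $\D$ with $|\omega'|\le1$. If $\omega'$ is a unimodular constant, then $|c_1|=1$ and $c_2=0$, so \eqref{eq-11} holds trivially. Otherwise $|\omega'|<1$ in $\D$ by the maximum principle. The classical inequality $|\varphi'(0)|\le1-|\varphi(0)|^2$ for analytic self-maps $\varphi$ of the disc, which is Schwarz--Pick at the origin, applied to $\varphi=\omega'$, gives $|c_1|\le1$ and $2|c_2|\le1-|c_1|^2$. This is exactly \eqref{eq-11}.

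\textbf{Main obstacle.} The only genuinely delicate point is the algebraic identity $(z/f)^2f'-1=\lambda z^2\omega'$, which converts the class condition into a Schwarz-type bound. One should also confirm that $z/f$ is analytic, which is guaranteed by the standing assumption on $\mathcal{U}(\lambda)$. Everything after that is a standard application of the Schwarz lemma.
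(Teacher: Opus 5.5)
Your proof is correct and complete. The paper gives no proof of this lemma; it quotes it from an earlier work. Your argument is the standard derivation of the representation. It uses the identity $(z/f)^2f'=z/f-z(z/f)'=-z^2(1/f)'$ to turn the class condition into $|z^2\omega'(z)|<1$, then the Schwarz lemma for a double zero, then Schwarz--Pick for $\omega'$. The one difference is direction: you define $\omega$ from $z/f$ and differentiate, whereas the usual presentation writes $(z/f)^2f'=1+\lambda\omega_1$ with $\omega_1(0)=\omega_1'(0)=0$ and integrates, $\omega(z)=\int_0^z\omega_1(t)t^{-2}\,dt$. This is the same computation read backwards.

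One phrasing fix: rather than invoking Schwarz for $h(z)/z$ outright, apply the maximum principle directly to $h(z)/z^2=\omega'(z)$. Alternatively, first apply Schwarz to $h$, so that $h(z)/z$ is known to be a self-map of $\D$.
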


\medskip

\begin{thm}\label{th4}
If $f$ belongs to $\mathcal{U}(\lambda),$ $0< \lambda \leq 1,$ then $|S'_{f}(0)|\leq 12\lambda$
and the result is sharp.
\end{thm}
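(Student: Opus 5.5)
The plan is to express $S'_{f}(0)$ through the coefficients of $z/f(z)$ rather than those of $f$, since the combination $a_4-2a_2a_3+a_2^3$ turns out to collapse to a single coefficient of $z/f$.

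\textbf{Step 1: coefficients of $z/f$.} Write
\[
\frac{z}{f(z)}=1+b_1z+b_2z^2+b_3z^3+\cdots .
\]
Inverting the series $f(z)/z=1/(1+b_1z+b_2z^2+\cdots)$ gives
\[
a_2=-b_1,\qquad a_3=b_1^2-b_2,\qquad a_4=-b_1^3+2b_1b_2-b_3 .
\]
Substituting these into $a_4-2a_2a_3+a_2^3$, all terms cancel except $-b_3$. Hence, by \eqref{eq-2},
\[
S'_{f}(0)=-24\,b_3 .
\]

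\textbf{Step 2: apply the lemma.} By the preceding lemma (from \cite{OP-2019}), $z/f(z)=1-a_2z-\lambda z\omega(z)$ with $\omega(z)=c_1z+c_2z^2+\cdots$. Comparing coefficients gives $b_1=-a_2$, which is consistent. It also gives $b_2=-\lambda c_1$ and $b_3=-\lambda c_2$. Therefore $|S'_{f}(0)|=24\lambda|c_2|$. Using \eqref{eq-11}, $|c_2|\le\frac12(1-|c_1|^2)\le\frac12$, so $|S'_{f}(0)|\le 12\lambda$.

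The only delicate point is the index shift caused by the factor $z$ in $z\omega(z)$. Because of it, the $z^3$ coefficient of $z/f$ comes from $c_2$ and not from $c_3$, and that is exactly why the bound \eqref{eq-11} on $c_2$ suffices.

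\textbf{Step 3: sharpness.} Take $\omega(z)=z^2/2$. This has $c_1=0$ and $c_2=\frac12$, and satisfies $|\omega(z)|\le|z|$ and $|\omega'(z)|=|z|\le1$. The corresponding function is
\[
f(z)=\frac{z}{1-\frac{\lambda}{2}z^3}.
\]
To check that $f\in\U(\lambda)$, use the identity
\[
\left(\frac{z}{f(z)}\right)^2f'(z)=\frac{z}{f(z)}-z\left(\frac{z}{f(z)}\right)'=1-\frac{\lambda}{2}z^3+\frac{3\lambda}{2}z^3=1+\lambda z^3 .
\]
Thus $\left|\left(\frac{z}{f(z)}\right)^2f'(z)-1\right|=\lambda|z|^3<\lambda$ in $\ID$. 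For this $f$ we have $b_3=-\lambda/2$, so $|S'_{f}(0)|=12\lambda$.
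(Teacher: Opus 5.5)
Your proof is correct and follows essentially the same route as the paper. Both arguments use the representation $z/f(z)=1-a_2z-\lambda z\omega(z)$ to reduce $|S'_f(0)|$ to $24\lambda|c_2|$, apply $|c_2|\le\frac12(1-|c_1|^2)$, and use the same extremal function. The only differences are that you first isolate the general identity $a_4-2a_2a_3+a_2^3=-b_3$ rather than equating coefficients directly in $z=[1-a_2z-\lambda z\omega(z)]f(z)$, and that you verify the extremal function actually lies in $\U(\lambda)$, which the paper does not check.
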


\begin{proof}
Using \eqref{eq-10} we have
\[
z =[1-a_2z-\lambda z\omega(z)]\cdot f(z),
\]
and after equating the coefficients,
\[
\begin{split}
a_3 =& \lambda c_1+a^2_2,\\
a_4=&\lambda c_2+2\lambda a_2c_1+a_2^3.
\end{split}
\]
Finally, using previous relations and \eqref{eq-2}, we have
\[
\begin{split}
|S'_{f}(0)|&=24|a_{4}-2a_{2}a_{3}+a_{2}^{3}|=24|\lambda c_{2}|\\
&\leq 24\lambda\cdot \frac{1}{2}(1-|c_1|^2) \leq12\lambda,
\end{split}
\]
where we used \eqref{eq-11}. The result is sharp with extremal function
$f(z)=\frac{z}{1-\frac{\lambda}{2}z^{3}}.$
\end{proof}

\medskip

\section{Estimate of $|S'_{f}(0)|$ for the general class $\mathcal{S}$ of univalent functions}

\medskip

For our investigation in this section we  will use the Grunsky coefficients and its properties. Here are basic definitions and results  on those coefficients based on the book of N.A. Lebedev (\cite{Lebedev}).

\medskip

Let $f \in \mathcal{S}$ and let
\[
\log\frac{f(t)-f(z)}{t-z}=\sum_{p,q=0}^{\infty}\omega_{p,q}t^{p}z^{q},
\]
where $\omega_{p,q}$ are called Grunsky's coefficients with property $\omega_{p,q}=\omega_{q,p}$.
For those coefficients we have the next Grunsky's inequalitiy (\cite{duren,Lebedev}):
\be\label{eq-12}
\sum_{q=1}^{\infty}q \left|\sum_{p=1}^{\infty}\omega_{p,q}x_{p}\right|^{2}\leq \sum_{p=1}^{\infty}\frac{|x_{p}|^{2}}{p},
\ee
where $x_{p}$ are arbitrary complex numbers such that $0< \sum_{p=1}^{\infty}\frac{|x_{p}|^{2}}{p}< +\infty$. If $\overline{\lim}_{p\to\infty} \sqrt[p]{|x_p|}<1$, then in \eqref{eq-12} we have equality if, and only if, the area of $\widehat{\mathbb{C}} \setminus f^{-1}(\D)$ is zero, where $f^{-1}(z)= \frac{1}{f(z)}$.

\medskip

Further, it is well-known that if $f$ given by \eqref{eq-1}
belongs to $\mathcal{S}$, then also
\be\label{eq-13}
f_{2}(z)=\sqrt{f(z^{2})}=z +c_{3}z^3+c_{5}z^{5}+\cdots
\ee
belongs to the class $\mathcal{S}$. Then, for the function $f_{2}$ we have the appropriate Grunsky's
coefficients of the form $\omega_{2p-1,2q-1}$, and the inequality \eqref{eq-12} has the form:
\be\label{eq-14}
\sum_{q=1}^{\infty}(2q-1) \left|\sum_{p=1}^{\infty}\omega_{2p-1,2q-1}x_{2p-1}\right|^{2}\leq \sum_{p=1}^{\infty}\frac{|x_{2p-1}|^{2}}{2p-1}.
\ee
Here, and further in the paper we omit the upper index (2) in  $\omega_{2p-1,2q-1}^{(2)}$ if compared with Lebedev's notation.

\medskip

From the inequality \eqref{eq-14}, when $x_{2p-1}=0$ and $p=3,4,\ldots$, we have
\[
|\omega_{11} x_1 +\omega_{31} x_3 |^2 +3|\omega_{13} x_1 +\omega_{33} x_3 |^2 + 5|\omega_{15} x_1 +\omega_{35} x_3 |^2 \le |x_1|^2+\frac{|x_3|^2}{3}.
\]
 From here, for $x_1=1$ and $x_3=0$, we obtain
\begin{equation}\label{eq-16}
  |\omega_{11}|^2 + 3 |\omega_{13}|^2 + 5|\omega_{15}|^2 \leq1.
\end{equation}

\medskip

As it has been shown in \cite[p. 57]{Lebedev}, if $f$ is given by \eqref{eq-1}, then the coefficients $a_{2}$, $ a_{3}$, and $ a_{4}$  are expressed by Grunsky's coefficients  $\omega_{2p-1,2q-1}$ of the function $f_{2}$ given by
\eqref{eq-13} in the following way:
\begin{equation}\label{eq-17}
\begin{split}
a_{2}&=2\omega _{11},\\
a_{3}&=2\omega_{13}+3\omega_{11}^{2}, \\
a_{4}&=2\omega_{33}+8\omega_{11}\omega_{13}+\frac{10}{3}\omega_{11}^{3},\\
0&=3\omega_{15}-3\omega_{11}\omega_{13}+\omega_{11}^{3}-3\omega_{33}.
\end{split}
\end{equation}

\medskip

Now, we can formulate and prove the main result of this section.

\begin{thm}\label{th5}
If $f$ belongs to $\mathcal{S}$, then $|S'_{f}(0)|\leq \frac{64}{5}\sqrt{3}=22.17025\ldots.$
\end{thm}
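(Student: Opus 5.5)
The plan is to express $S'_{f}(0)$ through the Grunsky coefficients of $f_{2}$ in \eqref{eq-13}, and then bound the resulting expression using the Grunsky inequality in the form \eqref{eq-16}.

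\textbf{Step 1: reduction to Grunsky coefficients.} Substitute \eqref{eq-17} into \eqref{eq-2}. From the first three relations in \eqref{eq-17},
\[
a_4-2a_2a_3+a_2^3=2\omega_{33}+\left(\tfrac{10}{3}-12+8\right)\omega_{11}^3=2\omega_{33}-\tfrac{2}{3}\omega_{11}^3 ,
\]
since the $\omega_{11}\omega_{13}$ terms cancel. The fourth relation in \eqref{eq-17} gives $\omega_{33}=\omega_{15}-\omega_{11}\omega_{13}+\frac13\omega_{11}^3$. Substituting this, the cubic terms also cancel:
\[
S'_{f}(0)=48\left(\omega_{15}-\omega_{11}\omega_{13}\right).
\]
Hence
\[
|S'_{f}(0)|\le 48\left(|\omega_{15}|+|\omega_{11}||\omega_{13}|\right).
\]

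\textbf{Step 2: the extremal problem.} Put $a=|\omega_{11}|$, $b=|\omega_{13}|$, $c=|\omega_{15}|$. By \eqref{eq-16} these satisfy $a^2+3b^2+5c^2\le1$. It therefore suffices to show that
\[
\max\{\,c+ab:\ a,b,c\ge0,\ a^2+3b^2+5c^2\le1\,\}=\frac{4\sqrt3}{15},
\]
because $48\cdot\frac{4\sqrt3}{15}=\frac{64}{5}\sqrt3$.

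\textbf{Step 3: an elementary route.} I would first fix $t=a^2+3b^2$. By AM--GM, $ab=\frac{1}{\sqrt3}\,a\,(\sqrt3 b)\le \frac{t}{2\sqrt3}$, and $c\le\sqrt{(1-t)/5}$. The problem reduces to maximizing
\[
g(t)=\frac{t}{2\sqrt3}+\sqrt{\frac{1-t}{5}}, \qquad 0\le t\le1 .
\]
The function $g$ is concave on $[0,1]$. Setting $g'(t)=0$ gives $\sqrt{5(1-t)}=\sqrt3$, so $t=\frac25$. At this point
\[
g\!\left(\tfrac25\right)=\frac{1}{5\sqrt3}+\frac{\sqrt3}{5}=\frac{4\sqrt3}{15}.
\]
Concavity makes this critical point the global maximum on $[0,1]$, which would complete the proof.

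\textbf{Step 4: cross-check and expected difficulty.} The same optimum should appear from Lagrange multipliers: $\lambda=1/(2\sqrt3)$, $c=\sqrt3/5$, $a=\sqrt3\, b$, $b^2=1/15$. I expect the main obstacle to be Step 1, namely the careful algebra with \eqref{eq-17}. The cancellation of the cubic terms depends on using the fourth (auxiliary) relation, which is presumably why \eqref{eq-16} was derived to include $\omega_{15}$. I would not claim sharpness, since the statement does not assert it.
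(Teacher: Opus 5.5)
Your proof is correct, and Step 1 is the same as the paper's. The paper also derives $S'_{f}(0)=48(\omega_{15}-\omega_{11}\omega_{13})$ from \eqref{eq-2} and \eqref{eq-17}, applies the triangle inequality, and uses \eqref{eq-16} in the form $|\omega_{15}|\le\frac{1}{\sqrt5}\sqrt{1-|\omega_{11}|^2-3|\omega_{13}|^2}$.

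You differ only in the final extremal problem. The paper maximizes the two-variable function $g(x,y)=\frac{1}{\sqrt5}\sqrt{1-x^2-3y^2}+xy$ over $\Omega=\{(x,y):0\le x\le 1,\ 0\le y\le\sqrt{(1-x^2)/3}\}$. It solves $\partial g/\partial x=\partial g/\partial y=0$ in the interior, getting the unique solution $x=1/\sqrt5$, $y=1/\sqrt{15}$, and then checks the four boundary pieces one by one. Your AM--GM step $ab\le (a^2+3b^2)/(2\sqrt3)$ instead reduces everything to a concave function of the single variable $t=a^2+3b^2$. Global maximality then follows from concavity, with no boundary analysis. Your route also shows the equality configuration directly ($a=\sqrt3\,b$, $t=2/5$), which is exactly the paper's critical point. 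The paper's method is more mechanical; yours is shorter and less error-prone. Indeed, the paper's text reports the value of $g$ at the critical point as $\frac{64}{5}\sqrt3$, conflating $g$ with $48g$, whereas your $\frac{4\sqrt3}{15}$ is the correct value of the inner maximum.
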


\begin{proof}
Using \eqref{eq-2} and \eqref{eq-17}, we have
\[
\begin{split}
|S'_{f}(0)|&=24|a_{4}-2a_{2}a_{3}+a_{2}^{3}|\\
&=48\left|\omega_{33}-\frac{1}{3}\omega_{11}^{3}\right|,
\end{split}
\]
or, if we use the fourth relation from \eqref{eq-17} and the relation \eqref{eq-16},
\[
\begin{split}
|S'_{f}(0)|&=48|\omega_{15}-\omega_{11}\omega_{13}|\\
&\leq48(|\omega_{15}|+|\omega_{11}||\omega_{13}|)\\
&\leq 48\left(\frac{1}{\sqrt{5}}\sqrt{1-|\omega_{11}|^2-3|\omega_{13}|^2}+|\omega_{11}||\omega_{13}|\right)\\
&=:48\cdot g(|\omega_{11}|,|\omega_{13}|),
\end{split}
\]
where
$g(x,y)=\frac{1}{\sqrt{5}}\sqrt{1-x^2-3y^2}+x y$ and
\[ (x,y)\in \Omega = \left\{(x,y) : 0\leq x \leq 1,\, 0\leq y\leq \frac{1}{\sqrt3}\sqrt{1-x^2} \right\}. \]

\medskip

Now, let find maximal value of the function $g$ on $\Omega$.

\medskip

To derive the critical points of $g$, we need to solve in the interior of $\Omega$ the following system
\[
\left\{
\begin{split}
\frac{\partial g(x,y)}{\partial x} &= y-\frac{1}{\sqrt{5} }\frac{x}{\sqrt{1-x^2-3 y^2}} = 0,\\
\frac{\partial g(x,y)}{\partial y} &= x-\frac{1}{\sqrt{5} }\frac{3 y}{\sqrt{1-x^2-3 y^2}} = 0.
\end{split}
\right.
\]
Now, from
\[3y\frac{\partial f_2(x,y)}{\partial x} - x \frac{\partial f_2(x,y)}{\partial y} =
3 y^2-x^2 = 0 , \]
we receive
\[y^2=\frac{x^2}{3}.\]
Substituting it in the second equation of the system gives
\[ \frac{x}{\sqrt{3}}-\frac{1}{\sqrt{5} }\frac{x}{\sqrt{1-2 x^2}} = 0,\]
which brings the unique solution of the system in the interior of $\Omega$, $x_1=\frac{1}{\sqrt5}$ and $y_1=\frac{1}{\sqrt{15}}$ such that $g(x_1,y_1)=\frac{64}{5}\sqrt{3}=22.17025\ldots$.

\medskip

On the edges of $\Omega$ we have:
\begin{itemize}
  \item[-] $g(0,y)=\frac{1}{\sqrt5}\sqrt{1-3 y^2}\le \frac{1}{\sqrt5} = 0.4472\ldots;$
  \item[-] $g(1,y) = g(1,0) = 0$;
  \item[-] $g(x,0) = \frac{1}{\sqrt5}\sqrt{1-x^2}\le \frac{1}{\sqrt5} = 0.4472\ldots;$
  \item[-] $h(x):=g\left(x, \sqrt{1-x^2}/\sqrt3\right) = \frac{1}{\sqrt3}x\sqrt{1-x^2}\le h(1/\sqrt2) = \sqrt3/6 = 0.288675\ldots$.
\end{itemize}
Combining the above analysis, we receive that the function $g$, on the domain $\Omega$ achieves the greatest value $\frac{64}{5}\sqrt{3}$ obtained for $x_1=\frac{1}{\sqrt5}$ and $y_1=\frac{1}{\sqrt{15}}$, i.e.,
\[|S'_{f}(0)|\leq \frac{64}{5}\sqrt{3}=22.17025\ldots.\]
\end{proof}

\medskip

\end{document}